\newtheorem{theorem}{Theorem}
\newtheorem{definition}[theorem]{Definition}
\newtheorem{proposition}[theorem]{Proposition}
\newenvironment{proof}[1][Proof]{\noindent\textbf{#1.} }{\ \rule{0.5em}{0.5em}}
\begin{document}

\title{Planar Harmonic and Monogenic Polynomials of Type \textit{A}}
\author{Charles F. Dunkl\\Dept. of Mathematics,\\University of Virginia,\\P.O. Box 400137, Charlottesville, VA 22904-4137}
\date{7 September 2016}
\maketitle

\begin{abstract}
Harmonic polynomials of type A are polynomials annihilated by the Dunkl
Laplacian associated to the symmetric group acting as a reflection group on
$\mathbb{R}^{N}$. The Dunkl operators are denoted by $T_{j}$ for $1\leq j\leq
N$, and the Laplacian $\Delta_{\kappa}=\sum_{j=1}^{N}T_{j}^{2}$. This paper
finds the homogeneous harmonic polynomials annihilated by all $T_{j}$ for
$j>2$. The structure constants with respect to the Gaussian and sphere inner
products are computed. These harmonic polynomials are used to produce
monogenic polynomials, those annihilated by a Dirac-type operator.

\end{abstract}
\maketitle

\section{Introduction}

The symmetric group $\mathcal{S}_{N}$ acts on $x\in\mathbb{R}^{N}$ as a
reflection group by permutation of coordinates. The group is generated by
reflections in the mirrors $\left\{  x:x_{i}=x_{j},i<j\right\}  $. The
function $w_{\kappa}\left(  x\right)  =\prod\limits_{1\leq i<j\leq
N}\left\vert x_{i}-x_{j}\right\vert ^{2\kappa}$ with parameter $\kappa$ is
invariant under this action and for $\kappa>-\frac{1}{N}$ there are several
measures that incorporate $w_{\kappa}$ and give rise to interesting
orthogonality structures. The corresponding measure on the $N$-torus is
related to the Calogero-Sutherland quantum-mechanical model of $N$ identical
particles on the circle with $1/r^{2}$ interaction potential, and the measure
$w_{\kappa}\left(  x\right)  e^{-\left\vert x\right\vert ^{2}/2}\mathrm{d}x$
is related to the model of $N$ identical particles on the line with $1/r^{2}$
interactions and harmonic confinement. This paper mainly concerns the measure
on the unit sphere in $\mathbb{R}^{N}$ for which there is an orthogonal
decomposition involving harmonic polynomials. In the present setting
\textit{harmonic} refers to the Laplacian operator $\Delta_{\kappa}$ produced
by the type-$A$ Dunkl operators.

For $x\in\mathbb{R}^{N}$ and $\left\{  i,j\right\}  \subset\left\{
1,2,\ldots,N\right\}  $ set $x\left(  i,j\right)  =\left(  \ldots,\overset
{i}{x_{j}},\ldots,\overset{j}{x_{i}},\ldots\right)  $, that is, entries $\#i$
and $\#j$ are interchanged.

\begin{definition}
For a polynomial $f$ and $1\leq i\leq N$%
\begin{align*}
T_{i}f\left(  x\right)    & :=\frac{\partial}{\partial x_{i}}f\left(
x\right)  +\kappa\sum_{j=1,j\neq i}^{N}\frac{f\left(  x\right)  -f\left(
x\left(  i,j\right)  \right)  }{x_{i}-x_{j}},\\
\Delta_{\kappa}f\left(  x\right)    & :=\sum_{i=1}^{N}T_{i}^{2}.
\end{align*}

\end{definition}

The (\textit{Dunkl}) operators $T_{i}$ mutually commute and map polynomials to
polynomials. The background for the theory can be found in the treatise
\cite[Ch. 6, Ch. 10.2]{DX}. An orthogonal basis for $L^{2}\left(
\mathbb{R}^{N},w_{\kappa}\left(  x\right)  e^{-\left\vert x\right\vert ^{2}%
/2}\mathrm{d}x\right)  $ can be defined in terms of products $f\left(
x\right)  L_{n}^{\lambda}\left(  \left\vert x\right\vert ^{2}/2\right)  $
where $f$ comes from an orthogonal set of harmonic homogeneous polynomials and
the Laguerre polynomial index $\lambda=\deg f-1+\frac{N}{2}\left(  \left(
N-1\right)  \kappa+1\right)  $. However attempts to explicitly construct
harmonic polynomials run into technical complications, presumably due to the
fact that the sign-changes (example: $x\mapsto\left(  -x_{1},x_{2}%
,\ldots,x_{N}\right)  $) are not elements of the symmetry group and thus the
$+/-$ symmetry of $T_{i}^{2}$ can not be used. To start on the construction
problem we will determine all the harmonic homogeneous polynomials annihilated
by $T_{j}$ for $2<j\leq N$. They are the analogues of ordinary harmonic
polynomials in two variables and thus we call them \textit{planar. }In this
situation there is a natural symmetry based on the transposition $\left(
1,2\right)  $: polynomials $f$ satisfying $f\left(  x\left(  1,2\right)
\right)  =f\left(  x\right)  $ are called \textit{symmetric} and those
satisfying $f\left(  x\left(  1,2\right)  \right)  =-f\left(  x\right)  $ are
called \textit{antisymmetric.} Then $T_{1}+T_{2}$ preserves the symmetry type
and $T_{1}-T_{2}$ reverses it. This property is relevant since $T_{1}%
^{2}+T_{2}^{2}=\frac{1}{2}\left(  T_{1}+T_{2}\right)  ^{2}+\frac{1}{2}\left(
T_{1}-T_{2}\right)  ^{2}$.

Section 2 describes the basis of polynomials used in the construction, sets up
and solves the recurrence equations required to produce symmetric and
antisymmetric harmonic polynomials. Also the formulae for the actions of
$T_{1}\pm T_{2}$ on the harmonics are derived. In Section 3 the inner product
structures involving the weight function $w_{\kappa}$ are defined and the
structural constants for the harmonic polynomials are computed. By means of
Clifford algebra techniques one can define an operator of Dirac type and
Section 4 describes this theory and produces the planar monogenic polynomials.
Finally Section 5 contains technical material providing proofs for some of the
results appearing in Sections 2 and 3.

\section{The \textit{p}-Basis and Construction of Harmonic Polynomials}

The natural numbers $\left\{  0,1,2,3\ldots\right\}  $ are denoted by
$\mathbb{N}_{0}$. The largest integer $\leq t\in\mathbb{R}$ is denoted by
$\left\lfloor t\right\rfloor $. Suppose $f$ is a polynomial in $x\in
\mathbb{R}^{N}$ then $\left(  1,2\right)  f$ denotes the polynomial $f\left(
x\left(  1,2\right)  \right)  $. To facilitate working with generating
functions we introduce the notation $\mathrm{coef}\left(  f,g_{j}\right)
:=c_{j}$ for the designated coefficient of $f$ in the expansion $f=\sum
_{i}c_{i}g_{i}$ in terms of a basis $\left\{  g_{i}\right\}  $. Throughout
$\kappa$ is a fixed parameter, implicit in $\left\{  T_{i}\right\}  $,
generally subject to $\kappa>-\frac{1}{N}$.

The $p$-basis associated to the operators $\left\{  T_{i}\right\}  $ is
constructed as follows: for $1\leq i\leq N$ the polynomials $p_{n}(x_{i};x)$
are given by the generating function
\[
\sum_{n=0}^{\infty}p_{n}(x_{i};x)r^{n}=(1-rx_{i})^{-1}\prod_{j=1}^{N}%
(1-rx_{j})^{-\kappa};
\]
then for $\alpha=(\alpha_{1},\ldots,\alpha_{N})\in\mathbb{N}_{0}^{N}$ (the
multi-indices), define $p_{\alpha}:=\prod_{i=1}^{N}p_{\alpha_{i}}(x_{i};x)$.
The set $\left\{  p_{\alpha}:\alpha\in\mathbb{N}_{0}^{N}\right\}  $ is a basis
for the polynomials for generic $\kappa$. The key property is that $T_{j}%
p_{n}(x_{i};x)=0$ for $j\neq i$. From \cite[Sect. 10.3]{DX} we find
\begin{gather}
T_{i}p_{\alpha}=(N\kappa+\alpha_{i})p_{\alpha_{i}-1}(x_{i};x)\prod_{m\neq
i}p_{\alpha_{m}}(x_{m};x)\label{TPA}\\
+\kappa\sum_{j\neq i}\sum_{m=0}^{\alpha_{j}-1}(p_{\alpha_{i}+\alpha_{j}%
-1-m}(x_{i};x)p_{m}(x_{j};x)-p_{m}(x_{i};x)p_{\alpha_{i}+\alpha_{j}-1-m}%
(x_{j};x))\nonumber\\
\times\prod_{n\neq i,j}p_{\alpha_{n}}(x_{n};x),\nonumber
\end{gather}
if $\alpha_{i}>0,$ and $T_{i}p_{\alpha}=0$ if $\alpha_{i}=0.$

Set up a symbolic calculus by letting $p_{j}^{n}$ denote $p_{n}\left(
x_{j};x\right)  $; more formally define a linear isomorphism from ordinary
polynomials to polynomials in the variables $\left\{  p_{1},\ldots
,p_{N}\right\}  $:%
\[
\Psi p_{\alpha}=p_{1}^{\alpha_{1}}p_{2}^{\alpha_{2}}\cdots p_{N}^{\alpha_{N}%
},~\alpha\in\mathbb{N}_{0}^{N},
\]
extended by linearity. Thus $\Psi\sum_{n=0}^{\infty}p_{n}(x_{i};x)r^{n}%
=\left(  1-p_{i}r\right)  ^{-1}$. In this form the action of $T_{i}$
(technically $\Psi T_{i}\Psi^{-1}$) on a function of $\left(  p_{1}%
,\ldots,p_{N}\right)  $ is given by%
\begin{align*}
T_{i}f\left(  p\right)   &  =\frac{\partial f}{\partial p_{i}}+N\kappa
\frac{f-\left(  p_{i}\rightarrow0\right)  f}{p_{i}}\\
&  +\kappa\sum_{j=1,j\neq i}^{N}\frac{\left(  p_{i}\rightarrow p_{j}\right)
f+\left(  p_{j}\rightarrow p_{i}\right)  f-f-\left(  p_{j}\longleftrightarrow
p_{i}\right)  f}{p_{i}-p_{j}}.
\end{align*}
The operators $\left(  p_{i}\rightarrow0\right)  $ and $\left(  p_{i}%
\rightarrow p_{j}\right)  $ replace $p_{i}$ by $0$ and $p_{j}$ respectively,
while $\left(  p_{j}\longleftrightarrow p_{i}\right)  $ is the transposition.
It suffices to examine the effect of the formula on monomials $p_{1}%
^{\alpha_{1}}p_{2}^{\alpha_{2}}\ldots p_{N}^{\alpha_{N}}$ and for $i=1.$ The
first two terms produce $(\alpha_{1}+N\kappa)$ \ if $\alpha_{1}>0,$ else $0.$
In the sum, the (typical) term for $j=2$ is $(p_{1}^{\alpha_{1}+\alpha_{2}%
}+p_{2}^{\alpha_{1}+\alpha_{2}}-p_{1}^{\alpha_{1}}p_{2}^{\alpha_{2}}%
-p_{1}^{\alpha_{2}}p_{2}^{\alpha_{1}})\prod_{m=3}^{N}p_{m}^{\alpha_{m}}%
/(p_{1}-p_{2}).$ A simple calculation shows this is the image under $\Psi$ of
the corresponding term in equation (\ref{TPA}) . This method was used in
\cite{D1} to find planar harmonics of type $B$ (the group generated by
sign-changes and permutation of coordinates).

From here on we will be concerned with polynomials in $p_{1},p_{2}$, that is,
exactly the set of polynomials annihilated by $T_{j}$ for $2<j\leq N$. Set
$p_{i,j}:=p_{i}\left(  x_{1};x\right)  p_{j}\left(  x_{2};x\right)  $ so that
$\Psi p_{i,j}=p_{1}^{i}p_{2}^{j}$. For each degree $\geq1$ there are two
independent harmonic polynomials, that is, $\left(  T_{1}^{2}+T_{2}%
^{2}\right)  f=0$, and a convenient orthogonal decomposition is by the action
of $\left(  1,2\right)  $; symmetric: $\left(  1,2\right)  f=f$, and
antisymmetric: $\left(  1,2\right)  f=-f$, to be designated by $+$ and $-$
superscripts, respectively. We use the operators $T_{1}+T_{2}$ and
$T_{1}-T_{2}$ (note $\left(  T_{1}^{2}+T_{2}^{2}\right)  =\frac{1}{2}\left(
T_{1}+T_{2}\right)  ^{2}+\frac{1}{2}\left(  T_{1}-T_{2}\right)  ^{2}$). The
harmonic polynomials will be expressed in the basis functions (symmetric)
$\phi_{nj}$ and (antisymmetric) $\psi_{nj}$ with generating functions
$u_{1},u_{2}$ (and $s:=\frac{1}{2}\left(  z+z^{-1}\right)  $) given by%
\begin{align*}
w_{1} &  :=(1-ztp_{1})^{-1}(1-z^{-1}tp_{2})^{-1},\\
w_{2} &  :=(1-z^{-1}tp_{1})^{-1}(1-ztp_{2})^{-1},\\
u_{1} &  :=\frac{1}{2}\left(  w_{1}+w_{2}\right)  =\frac{1-st(p_{1}%
+p_{2})+t^{2}p_{1}p_{2}}{(1-2stp_{1}+t^{2}p_{1}^{2})(1-2stp_{2}+t^{2}p_{2}%
^{2})},\\
u_{2} &  :=\left(  z-\frac{1}{z}\right)  ^{-1}\left(  w_{1}-w_{2}\right)
=\frac{t(p_{1}-p_{2})}{(1-2stp_{1}+t^{2}p_{1}^{2})(1-2stp_{2}+t^{2}p_{2}^{2}%
)},
\end{align*}%
\begin{align*}
u_{1} &  =\sum_{n=0}^{\infty}t^{n}\sum_{j=0}^{n}s^{j}\phi_{nj},\\
u_{2} &  =\sum_{n=1}^{\infty}t^{n}\sum_{j=0}^{n}s^{j}\psi_{nj}.
\end{align*}
There are parity conditions: $\phi_{nj}\neq0$ implies $j\equiv
n\operatorname{mod}2$ and $\psi_{nj}\neq0$ implies $j\equiv\left(  n-1\right)
\operatorname{mod}2$. These are formal power series and convergence is not
important (but is assured if $\max\left(  \left\vert zt\right\vert ,\left\vert
t/z\right\vert \right)  <\left(  \max_{i}\left\vert x_{i}\right\vert \right)
^{-1}$).

The following expressions are derived in Section 5: for $0\leq j\leq
\left\lfloor \frac{n}{2}\right\rfloor $%
\begin{align*}
\phi_{n,n-2j} &  =2^{n-1-2j}\sum_{i=0}^{j}\frac{\left(  n+1-2j\right)  _{2i}%
}{i!\left(  1-n+2j-2i\right)  _{i}}\left(  p_{n-j+i,j-i}+p_{j-i,n-j+i}\right)
\\
&  =2^{n-1-2j}\left(  p_{n-j,j}+p_{j,n-j}\right)  +2^{n-1-2j}\\
&  \times\sum_{i=1}^{j}\left(  n-2j+2i\right)  \frac{\left(  n+1-2j\right)
_{i-1}}{i!}\left(  -1\right)  ^{i}\left(  p_{n-j+i,j-i}+p_{j-i,n-j+i}\right)
;
\end{align*}
and for $0\leq j\leq\left\lfloor \frac{n-1}{2}\right\rfloor $%
\[
\psi_{n,n-1-2j}=2^{n-1-2j}\sum_{i=0}^{j}\frac{\left(  n-2j\right)  _{i}}%
{i!}\left(  -1\right)  ^{i}\left(  p_{n-j+i,j-i}-p_{j-i,n-j+i}\right)  .
\]

The reason for the use of this basis is that the actions of $T_{1}+T_{2}$ and
$T_{1}-T_{2}$ have relatively simple expressions. It is easy to verify that
(set $\partial_{v}:=\frac{\partial}{\partial v}$ for a variable $v$)
\begin{align*}
\partial_{p_{1}}w_{1} &  =zt(w_{1}+\frac{t}{2}\partial_{t}w_{1})+\frac{z^{2}%
t}{2}\partial_{z}w_{1},~\partial_{p_{2}}w_{1}=\frac{t}{z}(w_{1}+\frac{t}%
{2}\partial_{t}w_{1})-\frac{t}{2}\partial_{z}w_{1}\\
\partial_{p_{1}}w_{2} &  =\frac{t}{z}(w_{2}+\frac{t}{2}\partial_{t}%
w_{2})-\frac{t}{2}\partial_{z}w_{2},~\partial_{p_{2}}w_{2}=zt(w_{2}+\frac
{t}{2}\partial_{t}w_{2})+\frac{z^{2}t}{2}\partial_{z}w_{2}.
\end{align*}
After some calculations involving $\frac{\partial}{\partial z}=\frac{1}%
{2}\left(  1-z^{-2}\right)  \frac{\partial}{\partial s}$ we obtain%
\begin{align*}
\left(  \partial_{p_{1}}+\partial_{p_{2}}\right)  u_{1} &  =2stu_{1}%
+st^{2}\partial_{t}u_{1}+\left(  s^{2}-1\right)  t\partial_{s}u_{1},\\
\left(  \partial_{p_{1}}-\partial_{p_{2}}\right)  u_{1} &  =\left(
3s^{2}-2\right)  tu_{2}+\left(  s^{2}-1\right)  t^{2}\partial_{t}%
u_{2}+s\left(  s^{2}-1\right)  t\partial_{s}u_{2},\\
\left(  \partial_{p_{1}}+\partial_{p_{2}}\right)  u_{2} &  =3stu_{2}%
+st^{2}\partial_{t}u_{2}+\left(  s^{2}-1\right)  t\partial_{s}u_{2},\\
\left(  \partial_{p_{1}}-\partial_{p_{2}}\right)  u_{2} &  =2tu_{1}%
+t^{2}\partial_{t}u_{1}+st\partial_{s}u_{1}.
\end{align*}
Applying%
\begin{align*}
T_{1}+T_{2}-\partial_{p_{1}}-\partial_{p_{2}} &  =N\kappa\frac{1-\left(
p_{1}\rightarrow0\right)  }{p_{1}}+N\kappa\frac{1-\left(  p_{2}\rightarrow
0\right)  }{p_{2}},\\
T_{1}-T_{2}-\partial_{p_{1}}+\partial_{p_{2}} &  =N\kappa\frac{1-\left(
p_{1}\rightarrow0\right)  }{p_{1}}-N\kappa\frac{1-\left(  p_{2}\rightarrow
0\right)  }{p_{2}}\\
&  +2\kappa\frac{\left(  p_{1}\rightarrow p_{2}\right)  +\left(
p_{2}\rightarrow p_{1}\right)  -1-\left(  p_{1}\longleftrightarrow
p_{2}\right)  }{p_{1}-p_{2}}%
\end{align*}
to $u_{1}$ and $u_{2}$ yields%
\begin{align*}
\left(  T_{1}+T_{2}-\partial_{p_{1}}-\partial_{p_{2}}\right)  u_{1} &
=2stN\kappa u_{1},\\
\left(  T_{1}-T_{2}-\partial_{p_{1}}+\partial_{p_{2}}\right)  u_{1} &
=2t^{2}\left(  N\left(  s^{2}-1\right)  +1\right)  \kappa u_{2},\\
\left(  T_{1}+T_{2}-\partial_{p_{1}}-\partial_{p_{2}}\right)  u_{2} &
=2tsN\kappa u_{2},\\
\left(  T_{1}-T_{2}-\partial_{p_{1}}+\partial_{p_{2}}\right)  u_{2} &
=2tN\kappa u_{1}.
\end{align*}
Applying these to the generating functions results in%
\begin{align}
\left(  T_{1}+T_{2}\right)  \phi_{nj} &  =-\left(  j+1\right)  \phi
_{n-1,j+1}+\left(  2N\kappa+n+j\right)  \phi_{n-1,j-1},\label{tpts}\\
\left(  T_{1}-T_{2}\right)  \phi_{n,j} &  =-\left(  2N\kappa-2\kappa
+n+j+1\right)  \psi_{n-1,j}+\left(  2N\kappa+n+j\right)  \psi_{n-1,j-2}%
,\label{tmts}\\
\left(  T_{1}+T_{2}\right)  \psi_{nj} &  =-\left(  j+1\right)  \psi
_{n-1.j+1}+\left(  2N\kappa+n+j+1\right)  \psi_{n-1,j-1},\label{tpta}\\
\left(  T_{1}-T_{2}\right)  \psi_{nj} &  =\left(  2N\kappa+n+j+1\right)
\phi_{n-1,j}.\label{tmta}%
\end{align}

We will state the expressions for the harmonic polynomials before their
derivations, however it is necessary to define two families of polynomials via
three-term relations. The motivation comes later.

\begin{definition}
For $n=0,1,2,\ldots$ define two families of monic polynomials by%
\begin{align*}
g_{0}^{o}\left(  v\right)    & =1,g_{n+1}^{o}\left(  v\right)  =\left(
v+3n+1\right)  g_{n}^{o}\left(  v\right)  -n\left(  2n-1\right)  g_{n-1}%
^{o}\left(  v\right)  ,\\
g_{0}^{e}\left(  v\right)    & =1,g_{n+1}^{e}\left(  v\right)  =\left(
v+3n+2\right)  g_{n}^{e}\left(  v\right)  -n\left(  2n+1\right)  g_{n-1}%
^{e}\left(  v\right)  .
\end{align*}

\end{definition}

The first few polynomials are%
\begin{align*}
g_{1}^{o}\left(  v\right)    & =v+1,\\
g_{2}^{o}\left(  v\right)    & =v^{2}+5v+3,\\
g_{3}^{o}\left(  v\right)    & =v^{3}+12v^{2}+32v+15
\end{align*}
and%
\begin{align*}
g_{1}^{e}\left(  v\right)    & =v+2,\\
g_{2}^{e}\left(  v\right)    & =v^{2}+7v+7,\\
g_{3}^{e}\left(  v\right)    & =v^{3}+15v^{2}+53v+36.
\end{align*}

\begin{definition}
For $m=0,1,2,\ldots$ let%
\begin{align*}
h_{2m+1}^{-}  & :=\sum_{j=0}^{m}2^{-j}\frac{g_{j}^{o}\left(  N\kappa
-\kappa+m\right)  }{\left(  N\kappa+m+2\right)  _{j}}\psi_{2m+1,2j},\\
h_{2m}^{-}  & :=\sum_{j=0}^{m-1}2^{-j}\frac{g_{j}^{e}\left(  N\kappa
-\kappa+m\right)  }{\left(  N\kappa+m+2\right)  _{j}}\psi_{2m,2j+1},\\
h_{2m+1}^{+}  & :=\sum_{j=0}^{m}2^{-j}\frac{g_{j}^{e}\left(  N\kappa
-\kappa+m+1\right)  }{\left(  N\kappa+m+2\right)  _{j}}\phi_{2m+1,2j+1}\\
h_{2m}^{+}  & :=\sum_{j=0}^{m}2^{-j}\frac{g_{j}^{o}\left(  N\kappa
-\kappa+m\right)  }{\left(  N\kappa+m+1\right)  _{j}}\phi_{2m,2j},.
\end{align*}

\end{definition}

First we show that the antisymmetric polynomials $h_{n}^{-}$ are harmonic. We
will use the last relation to produce symmetric harmonic polynomials from the
antisymmetric ones. Combining equations (\ref{tmts}), (\ref{tmta}),
(\ref{tpta}) obtain
\begin{align*}
&  \left(  \left(  T_{1}+T_{2}\right)  ^{2}+\left(  T_{1}-T_{2}\right)
^{2}\right)  \psi_{nj}\\
&  =\left(  j+1\right)  \left(  j+2\right)  \psi_{n-2,j+2}-\left(
2N\kappa+n+j+1\right)  \left(  2N\kappa-2\kappa+n+3j+1\right)  \psi_{n-2,j}\\
&  +2\left(  2N\kappa+n+j+1\right)  \left(  2N\kappa+n-j+1\right)
\psi_{n-2,j-2}.
\end{align*}
Suppose $h_{n}^{-}=\sum_{j=0}^{\left\lfloor \left(  n-1\right)
/2\right\rfloor }c_{n-1-2j}\psi_{n,n-1-2j}$ is harmonic then the coefficient
of $\psi_{n-2,n-2j-1}$ in $2\Delta_{\kappa}h_{n}^{-}$ is
\begin{align*}
0 &  =8\left(  N\kappa+n-j+1\right)  \left(  N\kappa+n-j\right)  c_{n+1-2j}\\
&  -4\left(  N\kappa-\kappa+2n-1-3j\right)  \left(  N\kappa+n-j\right)
c_{n-1-2j}\\
&  +\left(  n-2j-2\right)  \left(  n-2j-1\right)  c_{n-3-2j}%
\end{align*}
The range of $j$ is derived from the inequality $0\leq n-2j-1\leq n-3$, that
is $1\leq j\leq\frac{n-1}{2}$. Two sets of formulae arise depending on the
parity of $n$. The equations are considered as recurrences.

Suppose $n=2m+1$ then the starting point is for $j=m$%
\[
8\left(  N\kappa+m+2\right)  \left(  N\kappa+m+1\right)  c_{2}-4\left(
N\kappa-\kappa+m+1\right)  \left(  N\kappa+m+1\right)  c_{0}=0,
\]
thus%
\[
c_{2}=\frac{1}{2}\frac{N\kappa-\kappa+m+1}{N\kappa+m+2}c_{0}.
\]
Set $j=m-i$ to obtain%
\begin{align*}
& c_{2i+2}\\
& =\frac{1}{2}\frac{\left(  N\kappa-\kappa+m+1+3i\right)  }{N\kappa
+m+2+i}c_{2i}-\frac{1}{4}\frac{\left(  2i-1\right)  i}{\left(  N\kappa
+m+2+i\right)  \left(  N\kappa+m+1+i\right)  }c_{2i-2}.
\end{align*}
To simplify the recurrences let $\gamma_{i}^{o}=2^{i}c_{2i}\left(
N\kappa+m+2\right)  _{i}/c_{0}$ then $\gamma_{0}^{o}=1$ and%
\[
\gamma_{i+1}^{o}=\left(  N\kappa-\kappa+m+1+3i\right)  \gamma_{i}^{o}-i\left(
2i-1\right)  \gamma_{i-1}^{o}.
\]
which agrees with the recurrence for $g_{i}^{o}$ with $v=N\kappa-\kappa+m$.

Thus the antisymmetric harmonic polynomial of degree $2m+1$ (normalized by
$c_{0}=1$) is%
\[
h_{2m+1}^{-}=\sum_{j=0}^{m}2^{-j}\frac{g_{j}^{o}\left(  N\kappa-\kappa
+m\right)  }{\left(  N\kappa+m+2\right)  _{j}}\psi_{2m+1,2j}.
\]
Suppose $n=2m$ then the starting point is for $j=m-1$%
\[
8\left(  N\kappa+m+2\right)  \left(  N\kappa+m+1\right)  c_{3}-4\left(
N\kappa-\kappa+m+2\right)  \left(  N\kappa+m+1\right)  c_{1}=0,
\]
so that%
\[
c_{3}=\frac{1}{2}\frac{N\kappa-\kappa+m+2}{N\kappa+m+2}c_{1}.
\]
Set $j=m-1-i$ to obtain%
\begin{align*}
& c_{2i+3}\\
& =\frac{1}{2}\frac{\left(  N\kappa-\kappa+m+2+3i\right)  }{N\kappa
+m+2+i}c_{2i+1}-\frac{1}{4}\frac{\left(  2i+1\right)  i}{\left(
N\kappa+m+2+i\right)  \left(  N\kappa+m+1+i\right)  }c_{2i-1}.
\end{align*}
Similarly to the previous calculation let $\gamma_{i}^{e}:=2^{i}%
c_{2i+1}\left(  N\kappa+m+2\right)  _{i}/c_{1}$ then $\gamma_{o}^{e}=1$ and%
\[
\gamma_{i+1}^{e}=\left(  N\kappa-\kappa+m+2+3i\right)  \gamma_{i}^{e}-i\left(
2i+1\right)  \gamma_{i-1}^{e}.
\]
This agrees with the recurrence for $g_{i}^{e}$ with $v=N\kappa-\kappa+m$.
Thus the antisymmetric harmonic polynomial of degree $2m$ (normalized by
$c_{1}=1$) is%
\[
h_{2m}^{-}=\sum_{j=0}^{m-1}2^{-j}\frac{g_{j}^{e}\left(  N\kappa-\kappa
+m\right)  }{\left(  N\kappa+m+2\right)  _{j}}\psi_{2m,2j+1}.
\]
Applying $\left(  T_{1}-T_{2}\right)  $ to a harmonic polynomial clearly
produces another harmonic polynomial; thus by equation (\ref{tmta})
\begin{align*}
&  \frac{1}{2\left(  N\kappa+m+1\right)  }\left(  T_{1}-T_{2}\right)
h_{2m+1}^{-}\\
&  =\sum_{j=0}^{m}2^{-j}\frac{g_{j}^{o}\left(  N\kappa-\kappa+m\right)
}{\left(  N\kappa+m+1\right)  _{j+1}}\left(  N\kappa+m+j+1\right)
\phi_{2m,2j}\\
&  =\sum_{j=0}^{m}2^{-j}\frac{g_{j}^{o}\left(  N\kappa-\kappa+m\right)
}{\left(  N\kappa+m+1\right)  _{j}}\phi_{2m,2j}=h_{2m}^{+}%
\end{align*}
and
\begin{align*}
&  \frac{1}{2\left(  N\kappa+m+1\right)  }\left(  T_{1}-T_{2}\right)
h_{2m}^{-}\\
&  =\sum_{j=0}^{m-1}2^{-j}\frac{g_{j}^{e}\left(  N\kappa-\kappa+m\right)
}{\left(  N\kappa+m+1\right)  _{j+1}}\left(  N\kappa+m+j+1\right)
\phi_{2m-1,2j+1}\\
&  =\sum_{j=0}^{m-1}2^{-j}\frac{g_{j}^{e}\left(  N\kappa-\kappa+m\right)
}{\left(  N\kappa+m+1\right)  _{j}}\phi_{2m-1,2j+1}=h_{2m-1}^{+}.
\end{align*}
We have proven:

\begin{proposition}
The polynomials $h_{n}^{+}$ and $h_{n}^{-}$ are harmonic.
\end{proposition}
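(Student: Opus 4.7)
The plan is to exploit the decomposition $\Delta_{\kappa}=T_{1}^{2}+T_{2}^{2}=\tfrac12(T_{1}+T_{2})^{2}+\tfrac12(T_{1}-T_{2})^{2}$, which is legitimate on the space in question because everything lives in $p_{1},p_{2}$, so $T_{j}$ annihilates these polynomials for $j>2$. Given the four actions \eqref{tpts}--\eqref{tmta} already derived for $T_{1}\pm T_{2}$ on the basis $\{\phi_{nj},\psi_{nj}\}$, one composes \eqref{tmts} with \eqref{tmta} and \eqref{tpta} with \eqref{tpta} to obtain a closed formula for $\Delta_{\kappa}\psi_{nj}$ as a linear combination of $\psi_{n-2,j+2},\psi_{n-2,j},\psi_{n-2,j-2}$ (and analogously for $\phi_{nj}$). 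Handling antisymmetric polynomials first is natural because $(T_{1}-T_{2})\psi$ produces only one $\phi$-term, so no cross-contamination spoils the recurrence.

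Next I would write an undetermined antisymmetric candidate $h=\sum_{j}c_{n-1-2j}\psi_{n,n-1-2j}$, apply $\Delta_{\kappa}$, and demand each coefficient vanish. This yields a three-term recurrence relating $c_{2i+2},c_{2i},c_{2i-2}$ (in the odd-$n$ case) or the odd-indexed analogue (even $n$). Splitting according to the parity of $n$ and choosing a normalization ($c_{0}=1$ or $c_{1}=1$), the key manoeuvre is the substitution $\gamma_{i}=2^{i}c_{2i}(N\kappa+m+2)_{i}/c_{0}$ (and the corresponding rescaling in the even case), which clears the rational coefficients and should produce exactly the recurrences defining $g_{i}^{o}$ and $g_{i}^{e}$ with parameter $v=N\kappa-\kappa+m$. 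Reading off the coefficients then gives the stated closed forms for $h_{2m+1}^{-}$ and $h_{2m}^{-}$; uniqueness of the solution of the recurrence (together with the fact that the space of antisymmetric homogeneous polynomials of degree $n$ in $p_{1},p_{2}$ annihilated by $\Delta_{\kappa}$ is at most one-dimensional, which follows from counting) confirms these are the harmonics.

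For the symmetric case I would avoid repeating the same computation. Since the Dunkl operators commute, $\Delta_{\kappa}$ commutes with $T_{1}-T_{2}$, so $(T_{1}-T_{2})h_{n}^{-}$ is automatically harmonic; it is symmetric because $(T_{1}-T_{2})$ reverses the $(1,2)$-parity. Applying \eqref{tmta} term-by-term to the expansion of $h_{n}^{-}$ and simplifying the factor $(N\kappa+n-j+1)/(N\kappa+m+2)_{j+1}=1/(N\kappa+m+1)_{j}$ times a constant should yield precisely $2(N\kappa+m+1)\,h_{n-1}^{+}$, identifying the constructed symmetric harmonics with the stated formulas.

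The step I expect to require the most care is the algebraic matching between the raw recurrence for the $c_{k}$ and the normalized $g_{i}^{o},g_{i}^{e}$ recurrences: the shifts in $m$, the factor of $2^{i}$, and the Pochhammer denominators must be tracked exactly so that the coefficient $i(2i-1)$ or $i(2i+1)$ emerges with the correct sign. Everything else is formal: once the operator identities on generating functions are granted, the whole argument reduces to verifying a one-parameter family of three-term recurrences and one application of $T_{1}-T_{2}$.
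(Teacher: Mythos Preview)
Your proposal is correct and follows essentially the same approach as the paper: compute $2\Delta_{\kappa}\psi_{nj}$ via \eqref{tpta} twice and \eqref{tmta} followed by \eqref{tmts}, set up the three-term recurrence for the coefficients $c_{n-1-2j}$, make the rescaling $\gamma_i=2^{i}c_{2i}(N\kappa+m+2)_{i}/c_{0}$ (and its even analogue) to recover the defining recurrences for $g_i^{o},g_i^{e}$ at $v=N\kappa-\kappa+m$, and then obtain $h_{n-1}^{+}$ from $(T_1-T_2)h_n^{-}$ using \eqref{tmta} and the telescoping of the Pochhammer denominators. The paper does exactly this, including the same substitutions and the same derivation of the symmetric harmonics from the antisymmetric ones.
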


For use in the sequel we find expressions for $T_{1}\pm T_{2}$ applied to
$h_{n}^{+}$ and $h_{n}^{-}$. 

\begin{proposition}
\label{TThaction}The actions of $T_{1}\pm T_{2}$ on the antisymmetric
polynomials $h_{n}^{-}$ are%
\begin{align*}
\left(  T_{1}-T_{2}\right)  h_{2m+1}^{-} &  =2\left(  N\kappa+m+1\right)
h_{2m}^{+},\\
\left(  T_{1}-T_{2}\right)  h_{2m}^{-} &  =2\left(  N\kappa+m+1\right)
h_{2m-1}^{+},\\
\left(  T_{1}+T_{2}\right)  h_{2m+1}^{-} &  =\left(  N\kappa-\kappa+m\right)
h_{2m}^{-},\\
\left(  T_{1}+T_{2}\right)  h_{2m}^{-} &  =2\left(  N\kappa+m+1\right)
h_{2m-1}^{-},
\end{align*}
and the actions on the symmetric polynomials $h_{n}^{+}$ are
\begin{align*}
\left(  T_{1}-T_{2}\right)  h_{2m+1}^{+} &  =-\left(  N\kappa-\kappa+m\right)
h_{2m}^{-},\\
\left(  T_{1}-T_{2}\right)  h_{2m}^{+} &  =-\left(  N\kappa-\kappa+m\right)
h_{2m-1}^{-},\\
\left(  T_{1}+T_{2}\right)  h_{2m+1}^{+} &  =2\left(  N\kappa+m+1\right)
h_{2m}^{+},\\
\left(  T_{1}+T_{2}\right)  h_{2m}^{+} &  =\left(  N\kappa-\kappa+m\right)
h_{2m-1}^{+}.
\end{align*}

\end{proposition}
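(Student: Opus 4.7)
The first two identities, $(T_1-T_2)h_{2m+1}^- = 2(N\kappa+m+1)h_{2m}^+$ and $(T_1-T_2)h_{2m}^- = 2(N\kappa+m+1)h_{2m-1}^+$, are already established in the passage immediately preceding the proposition, since they were the computations used to \emph{define} $h_{2m}^+$ and $h_{2m-1}^+$ from $h_{2m+1}^-$ and $h_{2m}^-$ via equation (\ref{tmta}). So only the six remaining identities require proof.

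The plan rests on a uniqueness observation. Since $T_1,\ldots,T_N$ mutually commute, both $T_1+T_2$ and $T_1-T_2$ commute with $\Delta_\kappa = \sum_i T_i^2$, and both preserve the subspace of polynomials in $p_1,p_2$ (those annihilated by $T_j$ for $j>2$). Moreover $T_1+T_2$ is fixed by the transposition $(1,2)$, hence preserves parity, while $T_1-T_2$ is anti-invariant, hence reverses parity. Therefore each expression $(T_1\pm T_2)h_n^\pm$ is a harmonic polynomial of degree $n-1$ in $p_1,p_2$ with a definite symmetry under $(1,2)$. As noted at the start of Section 2, the space of such harmonic polynomials of fixed positive degree and fixed parity is one-dimensional, so the output is necessarily a scalar multiple of the appropriate $h_{n-1}^\pm$.

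It thus suffices to determine six scalars, each by matching one coefficient. For the lowest-index basis element on the right-hand side (one of $\psi_{n-1,0}$, $\psi_{n-1,1}$, $\phi_{n-1,0}$, $\phi_{n-1,1}$, depending on the case), only the first one or two summands of $h_n^\pm$ can contribute under the action formulas (\ref{tpts})--(\ref{tmta}). Each scalar therefore reduces to a short algebraic identity involving $g_0$ and $g_1$ of the appropriate type. For instance, to verify $(T_1+T_2)h_{2m+1}^- = (N\kappa-\kappa+m)h_{2m}^-$, extract the coefficient of $\psi_{2m,1}$ using (\ref{tpta}); the only contributing terms of $h_{2m+1}^-$ are those indexed by $j=0$ and $j=2$, and using $g_1^o(v)=v+1$ the expression collapses cleanly to $N\kappa-\kappa+m$, which matches the $j=0$ coefficient of $h_{2m}^-$. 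The other five cases follow identically, using $g_1^e(v)=v+2$ where needed.

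The main obstacle is bookkeeping rather than depth: there are six cases, each requiring care with the parity of $n$ (which dictates whether $g_j^o$ or $g_j^e$ appears, and with which shift of its argument), with boundary behavior at small $m$ (where empty sums render some $h_n^\pm$ equal to zero), and with the two distinct denominator normalizations $(N\kappa+m+1)_j$ and $(N\kappa+m+2)_j$. The one-dimensionality reduction is essential here, for without it one would face a family of nontrivial identities indexed by $j$, which would force an appeal to the three-term recurrences defining $g_j^o$ and $g_j^e$ and turn each case into a much longer computation.
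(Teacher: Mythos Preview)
Your proposal is correct and follows essentially the same route as the paper's own proof: the paper likewise notes that the first two identities are already established, then invokes the one-dimensionality of each parity class of planar harmonics (stated just after the proposition as ``Since the resulting polynomials are harmonic it suffices to consider just one term in their expansions'') to reduce each remaining case to a single coefficient match involving only $g_0$ and $g_1^{o}$ or $g_1^{e}$. Your worked example for $(T_1+T_2)h_{2m+1}^-$ matches the paper's computation for that case line by line.
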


Since the resulting polynomials are harmonic it suffices to consider just one
term in their expansions. The coefficients of the lowest index term (
$\phi_{2m-1,1}$, $\phi_{2m,0}$, $\psi_{2m-1,0}$, $\psi_{2m,1}$ for
$h_{2m-1}^{+},h_{2m}^{+},h_{2m-1}^{-},h_{2m}^{-}$ respectively) on the right
sides arise from at most two terms on the left. The details are in Section 5.

\section{Inner Products and Structure Constants}

Let $\mu$ denote the Gaussian measure $\left(  2\pi\right)  ^{-N/2}%
e^{-\left\vert x\right\vert ^{2}/2}\mathrm{d}x$ on $\mathbb{R}^{N}$, (where
$dx$ is the Lebesgue measure), and let $m$ denote the normalized surface
measure on $S_{N-1}:=\left\{  x\in\mathbb{R}^{N}:\left\vert x\right\vert
=1\right\}  $. The weight function $w_{\kappa}\left(  x\right)  :=\prod
\limits_{1\leq i<j\leq N}\left\vert x_{i}-x_{j}\right\vert ^{2\kappa}$.The
constants $c_{\kappa}$ and $c_{k}^{^{\prime}}$ are defined by $c_{\kappa}%
\int_{\mathbb{R}^{N}}w_{\kappa}\mathrm{d}\mu=1$ and $c_{\kappa}^{^{\prime}%
}\int_{S_{N-1}}w_{\kappa}\mathrm{d}m=1$. It is known (Macdonald-Mehta-Selberg
integral) that $c_{\kappa}=\prod\limits_{j=2}^{N}\left(  \frac{\Gamma\left(
\kappa+1\right)  }{\Gamma\left(  j\kappa+1\right)  }\right)  $. There are
three inner products for polynomials associated with $\Delta_{\kappa}$. For
polynomials $f,g$ define

\begin{enumerate}
\item $\left\langle f,g\right\rangle _{\kappa}:=f\left(  T_{1},\ldots
,T_{N}\right)  g\left(  x\right)  |_{x=0}$ (evaluated at $x=0$);

\item $\left\langle f,g\right\rangle _{G}:=c_{\kappa}\int_{\mathbb{R}^{N}%
}fgw_{\kappa}\mathrm{d}\mu$, the Gaussian inner product;

\item $\left\langle f,g\right\rangle _{S}:=c_{\kappa}^{\prime}\int_{S_{N-1}%
}fgw_{\kappa}\mathrm{d}m.$
\end{enumerate}

The details can be found in \cite[Ch. 7.2]{DX}. There are important relations
among them: $\left\langle f,g\right\rangle _{\kappa}=\left\langle
e^{-\Delta_{\kappa}/2}f,e^{-\Delta_{\kappa}/2}g\right\rangle _{G}$ (note that
the series $\sum_{j\geq0}\frac{1}{j!}\left(  -\frac{\Delta_{\kappa}}%
{2}\right)  ^{j}f$ terminates for any polynomial $f$) and if $f$ is
homogeneous of degree $2n$ then%
\[
\int_{\mathbb{R}^{N}}fw_{\kappa}\mathrm{d}\mu=2^{n+N\left(  N-1\right)
\kappa/2}\frac{\Gamma\left(  \frac{N}{2}\left(  \left(  N-1\right)
\kappa+1\right)  +n\right)  }{\Gamma\left(  \frac{N}{2}\right)  }\int
_{S_{N-1}}fw_{\kappa}\mathrm{d}m.
\]
Specialized to $f=1$ this shows that%
\[
c_{\kappa}^{^{\prime}}=2^{N\left(  N-1\right)  \kappa/2}\frac{\Gamma\left(
\frac{N}{2}\left(  \left(  N-1\right)  \kappa+1\right)  \right)  }%
{\Gamma\left(  \frac{N}{2}\right)  }c_{\kappa}%
\]
and thus%
\[
c_{\kappa}\int_{\mathbb{R}^{N}}fw_{\kappa}\mathrm{d}\mu=2^{N\left(
N-1\right)  \kappa/2}\left(  \frac{N}{2}\left(  \left(  N-1\right)
\kappa+1\right)  \right)  _{n}~c_{\kappa}^{^{\prime}}\int_{S_{N-1}}fw_{\kappa
}\mathrm{d}m.
\]
As a consequence if $f$ and $g$ are harmonic and homogeneous of degrees $m,n$
respectively then%
\begin{equation}
\left\langle f,g\right\rangle _{\kappa}=\left\langle f,g\right\rangle
_{G}=2^{n}\left(  \frac{N}{2}\left(  \left(  N-1\right)  \kappa+1\right)
\right)  _{n}~\delta_{mn}\left\langle f,g\right\rangle _{S}.\label{inproGS}%
\end{equation}
It is a fundamental result that $\deg f\neq\deg g$ implies $\left\langle
f,g\right\rangle _{S}=0$.

To find $\left\langle f,f\right\rangle _{\kappa}$ for the harmonic polynomials
$h_{n}^{+},h_{n}^{-}$ we will need the values of $\phi_{nj}$ and $\psi_{nj}$
at $x=\left(  x_{1},x_{2},0,\ldots,0\right)  $. In terms of the generating
functions%
\[
\Psi^{-1}\left(  1-rp_{1}\right)  ^{-1}=\Psi^{-1}\sum_{n=0}^{\infty}p_{1}%
^{n}r^{n}=\left(  1-rx_{1}\right)  ^{-\kappa-1}\left(  1-rx_{2}\right)
^{-\kappa},
\]
thus
\begin{align*}
w_{1}\left(  x\right)   &  :=(1-ztx_{1})^{-1-\kappa}\left(  1-ztx_{2}\right)
^{-\kappa}(1-z^{-1}tx_{2})^{-1-\kappa}\left(  1-z^{-1}tx_{1}\right)
^{-\kappa},\\
w_{2}\left(  x\right)   &  :=(1-ztx_{2})^{-1-\kappa}\left(  1-ztx_{1}\right)
^{-\kappa}(1-z^{-1}tx_{1})^{-1-\kappa}\left(  1-z^{-1}tx_{2}\right)
^{-\kappa},
\end{align*}
and%
\begin{align*}
u_{1}\left(  x\right)   &  =\frac{1}{2}\left(  w_{1}\left(  x\right)
+w_{2}\left(  x\right)  \right)  \\
&  =\frac{\left(  1-ztx_{2}\right)  \left(  1-z^{-1}tx_{1}\right)  +\left(
1-ztx_{1}\right)  \left(  1-z^{-1}tx_{2}\right)  }{2\left\{  \left(
1-2stx_{1}+x_{1}^{2}t^{2}\right)  \left(  1-2stx_{2}+x_{2}^{2}t^{2}\right)
\right\}  ^{\kappa+1}}\\
&  =\frac{1-\left(  x_{1}+x_{2}\right)  st+x_{1}x_{2}t^{2}}{\left\{  \left(
1-2stx_{1}+x_{1}^{2}t^{2}\right)  \left(  1-2stx_{2}+x_{2}^{2}t^{2}\right)
\right\}  ^{\kappa+1}},\\
u_{2}\left(  x\right)   &  =\left(  z-\frac{1}{z}\right)  ^{-1}\left(
w_{1}\left(  x\right)  -w_{2}\left(  x\right)  \right)  \\
&  =\frac{\left(  x_{1}-x_{2}\right)  t}{\left\{  \left(  1-2stx_{1}+x_{1}%
^{2}t^{2}\right)  \left(  1-2stx_{2}+x_{2}^{2}t^{2}\right)  \right\}
^{\kappa+1}},
\end{align*}
because $\left(  z-z^{-1}\right)  ^{-1}\left\{  \left(  1-ztx_{2}\right)
\left(  1-z^{-1}tx_{1}\right)  -\left(  1-ztx_{1}\right)  \left(
1-z^{-1}tx_{2}\right)  \right\}  =\left(  x_{1}-x_{2}\right)  t$. Thus
$\phi_{nj}\left(  x\right)  =\mathrm{coef}\left(  u_{1}\left(  x\right)
,t^{n}s^{j}\right)  $ and $\psi_{nj}\left(  x\right)  =\mathrm{coef}\left(
u_{2}\left(  x\right)  ,t^{n}s^{j}\right)  .$

By the $\left(  1,2\right)  $ symmetry (both $w_{\kappa}$and $h_{n}^{+}$ are
invariant and $h_{n}^{-}$ changes sign) the inner products $\left\langle
h_{n}^{+},h_{n}^{-}\right\rangle =0$. Next we compute the pairing
$\left\langle f,f\right\rangle _{\kappa}$ for the harmonic polynomials. Since
they are annihilated by $T_{j}$ for $j>2$ these values are given by $f\left(
T_{1},T_{2},0,\ldots\right)  f$. We use the harmonicity of $f,$that is,
$\left(  T_{1}-T_{2}\right)  ^{2}f=-\left(  T_{1}+T_{2}\right)  ^{2}f.$ The
same relation holds when $f$ is replaced by $q\left(  T_{1},T_{2}\right)  f$
for any polynomial $q$. Suppose $\deg f=n$ and express%
\[
f\left(  x_{1},x_{2},0\ldots\right)  =\sum_{j=0}^{n}c_{j}\left(  x_{1}%
+x_{2}\right)  ^{n-j}\left(  x_{1}-x_{2}\right)  ^{j}.
\]
If $\left(  1,2\right)  f=f$ then $c_{j}=0$ for odd $j$ and%
\begin{align*}
f\left(  T_{1},T_{2},0\ldots\right)  f &  =\sum_{j=0}^{\left\lfloor
n/2\right\rfloor }c_{2j}\left(  T_{1}+T_{2}\right)  ^{n-2j}\left(  T_{1}%
-T_{2}\right)  ^{2j}f\\
&  =\sum_{j=0}^{\left\lfloor n/2\right\rfloor }c_{2j}\left(  -1\right)
^{j}\left(  T_{1}+T_{2}\right)  ^{n}f.
\end{align*}
Set $x_{1}=1+\mathrm{i},x_{2}=1-\mathrm{i}$ then%
\[
f\left(  x\right)  =\sum_{j=0}^{\left\lfloor n/2\right\rfloor }c_{2j}%
2^{n-2j}\left(  2\mathrm{i}\right)  ^{2j}=2^{n}\sum_{j=0}^{\left\lfloor
n/2\right\rfloor }c_{2j}\left(  -1\right)  ^{j};
\]
and thus%
\[
f\left(  T_{1},T_{2},0\ldots\right)  f=2^{-n}f\left(  1+\mathrm{i}%
,1-\mathrm{i},0\mathrm{\ldots}\right)  \left(  T_{1}+T_{2}\right)  ^{n}f.
\]
Proceeding similarly for $\left(  1,2\right)  f=-f$ where $c_{j}=0$ for even
$j$ we obtain%
\begin{align*}
f\left(  T_{1},T_{2},0\ldots\right)  f &  =\sum_{j=0}^{\left\lfloor \left(
n-1\right)  /2\right\rfloor }c_{2j+1}\left(  T_{1}+T_{2}\right)
^{n-1-2j}\left(  T_{1}-T_{2}\right)  ^{2j+1}f\\
&  =\sum_{j=0}^{\left\lfloor \left(  n-1\right)  /2\right\rfloor }%
c_{2j+1}\left(  -1\right)  ^{j}\left(  T_{1}-T_{2}\right)  \left(  T_{1}%
+T_{2}\right)  ^{n-1}f,
\end{align*}
and%
\[
f\left(  1+\mathrm{i},1-\mathrm{i},0\mathrm{\ldots}\right)  =\sum
_{j=0}^{\left\lfloor \left(  n-1\right)  /2\right\rfloor }c_{2j+1}%
2^{n-1-2j}\left(  2\mathrm{i}\right)  ^{2j+1}=\mathrm{i2}^{n}\sum
_{j=0}^{\left\lfloor \left(  n-1\right)  /2\right\rfloor }c_{2j+1}\left(
-1\right)  ^{j}.
\]
Thus%
\[
f\left(  T_{1},T_{2},0\ldots\right)  f=-\mathrm{i}2^{-n}f\left(
1+\mathrm{i},1-\mathrm{i},0\mathrm{\ldots}\right)  \left(  T_{1}+T_{2}\right)
^{n-1}\left(  T_{1}-T_{2}\right)  f.
\]

First the symmetric case (by Proposition \ref{TThaction}):%
\[
\left(  T_{1}+T_{2}\right)  ^{2}h_{2m}^{+}=\left(  T_{1}+T_{2}\right)  \left(
N\kappa-\kappa+m\right)  h_{2m-1}^{+}=2\left(  N\kappa-\kappa+m\right)
\left(  N\kappa+m\right)  h_{2m-2}^{+},
\]
and it follows by induction that%
\begin{align*}
\left(  T_{1}+T_{2}\right)  ^{2m}h_{2m}^{+} &  =2^{m}\left(  N\kappa
-\kappa+1\right)  _{m}\left(  N\kappa+1\right)  _{m},\\
\left(  T_{1}+T_{2}\right)  ^{2m+1}h_{2m+1}^{+} &  =2\left(  N\kappa
+m+1\right)  \left(  T_{1}+T_{2}\right)  ^{2m}h_{2m}^{+}\\
&  =2^{m+1}\left(  N\kappa-\kappa+1\right)  _{m}\left(  N\kappa+1\right)
_{m+1}.
\end{align*}
For the antisymmetric case%
\begin{align*}
& \left(  T_{1}+T_{2}\right)  ^{2m-1}\left(  T_{1}-T_{2}\right)  h_{2m}^{-}\\
& =2\left(  N\kappa+m+1\right)  \left(  T_{1}+T_{2}\right)  ^{2m-1}%
h_{2m-1}^{+}\\
& =2^{m+1}\left(  N\kappa+m+1\right)  \left(  N\kappa-\kappa+1\right)
_{m-1}\left(  N\kappa+1\right)  _{m}\\
& =2^{m+1}\left(  N\kappa-\kappa+1\right)  _{m-1}\left(  N\kappa+1\right)
_{m+1},
\end{align*}
and%
\begin{align*}
& \left(  T_{1}+T_{2}\right)  ^{2m}\left(  T_{1}-T_{2}\right)  h_{2m+1}^{-}\\
& =2\left(  N\kappa+m+1\right)  \left(  T_{1}+T_{2}\right)  ^{2m}h_{2m}^{+}\\
& =2^{m+1}\left(  N\kappa+m+1\right)  \left(  N\kappa-\kappa+1\right)
_{m}\left(  N\kappa+1\right)  _{m}\\
& =2^{m+1}\left(  N\kappa-\kappa+1\right)  _{m}\left(  N\kappa+1\right)
_{m+1}.
\end{align*}
The values $\phi_{nj}\left(  1+\mathrm{i},1-\mathrm{i},0\ldots\right)  $ and
$\psi_{nj}\left(  1+\mathrm{i},1-\mathrm{i},0\ldots\right)  $ are found by
computing the generating functions:%
\[
u_{1}\left(  1+\mathrm{i},1-\mathrm{i},0\ldots\right)  =\frac{1-2st+2t^{2}%
}{\left(  1-4st+8s^{2}t^{2}-8st^{3}+4t^{4}\right)  ^{\kappa+1}}%
\]
(the term in the denominator is $\left(  1-2st+2t^{2}\right)  ^{2}-4\left(
1-s^{2}\right)  t^{2}$) and%
\[
u_{2}\left(  1+\mathrm{i},1-\mathrm{i},0\ldots\right)  =\frac{2\mathrm{i}%
t}{\left(  1-4st+8s^{2}t^{2}-8st^{3}+4t^{4}\right)  ^{\kappa+1}}.
\]

\begin{definition}
\label{Snjab}For $n=0,1,2,\ldots$, $0\leq j\leq\left\lfloor \frac{n}%
{2}\right\rfloor $ and parameters $\alpha,\beta$ let%
\begin{multline*}
S\left(  n,j;\alpha,\beta\right)  \\
:=\sum_{\ell=0}^{\left\lfloor n/2\right\rfloor }\sum_{i=\max\left(
0,\ell+j-\left\lfloor n/2\right\rfloor \right)  }^{\min\left(  \ell,j\right)
}\frac{\left(  \alpha+1\right)  _{\ell}\left(  2\alpha+\beta+2\ell\right)
_{n-2\ell-j+i}}{i!\left(  \ell-i\right)  !\left(  j-i\right)  !\left(
n-2\ell-2j+2i\right)  !}\left(  -1\right)  ^{\ell+j}2^{n-j+i}.
\end{multline*}

\end{definition}

\begin{proposition}
\label{val1pI}For $0\leq j\leq\left\lfloor \frac{n}{2}\right\rfloor $%
\begin{align*}
\phi_{n,n-2j}\left(  1+\mathrm{i},1-\mathrm{i},0,\ldots\right)   &  =S\left(
n,j;\kappa,1\right)  ,\\
\psi_{n+1,n-2j}\left(  1+\mathrm{i},1-\mathrm{i},0,\ldots\right)   &
=2\mathrm{i}S\left(  n,j;\kappa,2\right)  .
\end{align*}

\end{proposition}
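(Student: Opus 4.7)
The plan is to expand $u_1(1+\mathrm{i},1-\mathrm{i},0,\ldots)$ and $u_2(1+\mathrm{i},1-\mathrm{i},0,\ldots)$ as formal double power series in $s$ and $t$, then identify the coefficients of $t^n s^{n-2j}$ and $t^{n+1} s^{n-2j}$ with the stated right-hand sides. Set $A:=1-2st+2t^2$ and $B:=4(1-s^2)t^2$; the shared denominator equals $A^2-B$, so a binomial expansion in $B/A^2$ gives
\[
(A^2-B)^{-(\kappa+1)} = \sum_{\ell\geq 0}\frac{(\kappa+1)_\ell}{\ell!}B^\ell A^{-(2\kappa+2\ell+2)}.
\]
Multiplying by the appropriate numerator yields $u_1=\sum_\ell\frac{(\kappa+1)_\ell}{\ell!}B^\ell A^{-(2\kappa+2\ell+1)}$ and $u_2=2\mathrm{i}\,t\sum_\ell\frac{(\kappa+1)_\ell}{\ell!}B^\ell A^{-(2\kappa+2\ell+2)}$, and the outer factor $(\kappa+1)_\ell$ is already the $(\alpha+1)_\ell$ of Definition~\ref{Snjab} with $\alpha=\kappa$.

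Next I expand the two remaining pieces: $B^\ell=4^\ell t^{2\ell}\sum_{p}\binom{\ell}{p}(-1)^p s^{2p}$ and
\[
A^{-\beta} = \sum_{m\geq 0}\frac{(\beta)_m}{m!}(2st-2t^2)^m = \sum_{m,k}\frac{(\beta)_m}{m!}\binom{m}{k}2^m(-1)^k s^{m-k}t^{m+k}.
\]
The generic summand carries the monomial $s^{2p+m-k}t^{2\ell+m+k}$; matching this to $t^n s^{n-2j}$ in $u_1$ (respectively to $t^{n+1}s^{n-2j}$ in $u_2$, after absorbing the prefactor $t$) forces $m=n-j-\ell-p$ and $k=j+p-\ell$, with $\beta=2\kappa+2\ell+1$ in the first case and $\beta=2\kappa+2\ell+2$ in the second. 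These are precisely the shapes of the Pochhammer arguments $2\alpha+\beta+2\ell$ in Definition~\ref{Snjab} for $\beta=1$ and $\beta=2$.

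Finally I perform the change of summation variable $p\mapsto\ell-p$: under it $k$ becomes $j-p$, $m-k$ becomes $n-2\ell-2j+2p$, the Pochhammer index $n-j-\ell-p$ becomes $n-2\ell-j+p$, the combined power $4^\ell\cdot 2^m$ collapses to $2^{n-j+p}$, and the sign $(-1)^{p+k}$ reduces to $(-1)^{j-\ell}=(-1)^{\ell+j}$. The constraints $0\leq p\leq\ell$, $k\geq 0$, and $k\leq m$ translate precisely to the range $\max(0,\ell+j-\lfloor n/2\rfloor)\leq p\leq\min(\ell,j)$, while $m\geq 0$ enforces $\ell\leq\lfloor n/2\rfloor$. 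Assembling the four factorials $p!,(\ell-p)!,k!,(m-k)!$ then produces $S(n,j;\kappa,1)$ for $\phi_{n,n-2j}$ and $2\mathrm{i}\,S(n,j;\kappa,2)$ for $\psi_{n+1,n-2j}$. The main obstacle is the delicate but routine bookkeeping around this index change and the simultaneous reconciliation of factorials, signs, and powers of $2$; no new idea beyond binomial expansion and reindexing is required.
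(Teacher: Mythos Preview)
Your proposal is correct and follows essentially the same route as the paper: both expand $(A^{2}-B)^{-(\kappa+1)}$ as a binomial series in $B/A^{2}$, then expand the resulting power $A^{-\lambda}$ and collect the coefficient of $t^{n}s^{n-2j}$. The only difference is cosmetic: the paper expands $A^{-\lambda}$ by first factoring $(1+2t^{2})^{-\lambda}$ and then expanding in $2st/(1+2t^{2})$, whereas you expand $(2st-2t^{2})^{m}$ directly; after the index change $p\mapsto\ell-p$ the two computations coincide term by term. One small remark: the bound $\ell\le\lfloor n/2\rfloor$ is not enforced by $m\ge0$ alone but follows from the combination $i\le j$ and $n-2\ell-2j+2i\ge0$; this does not affect the argument.
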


The proof is in Proposition \ref{genfunS}.

Thus%
\begin{align*}
\left\langle h_{2m}^{+},h_{2m}^{+}\right\rangle _{\kappa}  & =2^{-2m}%
h_{2m}^{+}\left(  1+\mathrm{i},1-\mathrm{i},0\mathrm{\ldots}\right)  \left(
T_{1}+T_{2}\right)  ^{2m}h_{2m}^{+}\\
& =2^{-m}\sum_{j=0}^{m}2^{-j}\frac{g_{j}^{o}\left(  N\kappa-\kappa+m\right)
}{\left(  N\kappa+m+1\right)  _{j}}S\left(  2m,m-j;\kappa,1\right)  \\
& \times\left(  N\kappa-\kappa+1\right)  _{m}\left(  N\kappa+1\right)  _{m},\\
\left\langle h_{2m+1}^{+},h_{2m+1}^{+}\right\rangle _{\kappa}  &
=2^{-2m-1}h_{2m+1}^{+}\left(  1+\mathrm{i},1-\mathrm{i},0\mathrm{\ldots
}\right)  \left(  T_{1}+T_{2}\right)  ^{2m+1}h_{2m+1}^{+}\\
& =2^{-m}\sum_{j=0}^{m}2^{-j}\frac{g_{j}^{e}\left(  N\kappa-\kappa+m+1\right)
}{\left(  N\kappa+m+2\right)  _{j}}S\left(  2m+1,m-j;\kappa,1\right)  \\
& \times\left(  N\kappa-\kappa+1\right)  _{m}\left(  N\kappa+1\right)  _{m+1},
\end{align*}
and%
\begin{align*}
\left\langle h_{2m}^{-},h_{2m}^{-}\right\rangle _{\kappa}  & =-\mathrm{i}%
2^{-2m}h_{2m}^{-}\left(  1+\mathrm{i},1-\mathrm{i},0\mathrm{\ldots}\right)
\left(  T_{1}+T_{2}\right)  ^{2m-1}\left(  T_{1}-T_{2}\right)  h_{2m}^{-}\\
& =2^{-m+2}\sum_{j=0}^{m-1}2^{-j}\frac{g_{j}^{e}\left(  N\kappa-\kappa
+m\right)  }{\left(  N\kappa+m+2\right)  _{j}}S\left(  2m,m-j-1;\kappa
,2\right)  \\
& \times\left(  N\kappa-\kappa+1\right)  _{m-1}\left(  N\kappa+1\right)
_{m+1},\\
\left\langle h_{2m+1}^{-},h_{2m+1}^{-}\right\rangle _{\kappa}  &
=-\mathrm{i}2^{-2m}h_{2m+1}^{-}\left(  1+\mathrm{i},1-\mathrm{i}%
,0\mathrm{\ldots}\right)  \left(  T_{1}+T_{2}\right)  ^{2m}\left(  T_{1}%
-T_{2}\right)  h_{2m+1}^{-}\\
& =2^{2-m}\sum_{j=0}^{m}2^{-j}\frac{g_{j}^{o}\left(  N\kappa-\kappa+m\right)
}{\left(  N\kappa+m+2\right)  _{j}}S\left(  2m+1,m-j;\kappa,2\right)  \\
& \times\left(  N\kappa-\kappa+1\right)  _{m}\left(  N\kappa+1\right)  _{m+1}.
\end{align*}
The values of $\left\langle h_{n}^{+},h_{n}^{+}\right\rangle _{S}$ and
$\left\langle h_{n}^{-},h_{n}^{-}\right\rangle _{S}$ can now be found by
equation (\ref{inproGS}). The expressions are complicated; due to the fact
that sign-changes are not in the symmetry group.

\section{The Dirac Operator and Monogenic Polynomials}

We use the Clifford algebra $C\ell_{N}$ over $\mathbb{R}$ generated by
$\left\{  \mathbf{e}_{1},\mathbf{e}_{2},\mathbf{e}_{3},\ldots,\mathbf{e}%
_{N}\right\}  $ with relations $\mathbf{e}_{i}^{2}=-1$ (that is, negative
signature) and $\mathbf{e}_{i}\mathbf{e}_{j}=-\mathbf{e}_{j}\mathbf{e}_{i}$
for $i\neq j$. The type-$A$ Dirac operator acting on polynomials in
$x\in\mathbb{R}^{N}$ with coefficients in $C\ell_{N}$ is defined by%
\[
\boldsymbol{D}f:=\sum_{i=1}^{N}\mathbf{e}_{i}T_{i}f;
\]
this implies $\boldsymbol{D}^{2}=-\sum_{i=1}^{N}T_{i}^{2}=-\Delta_{\kappa}$. A
polynomial $f$ is said to be monogenic if $\boldsymbol{D}f=0$. The situation
where the underlying symmetry group is $\mathbb{Z}_{2}^{N}$ has been
investigated by De Bie, Genest and Vinet \cite{DGV1},\cite{DGV2}. The planar
harmonic polynomials found in the previous sections can be used to construct
monogenic polynomials. They are of the form $f_{n}=h_{n}^{+}+\varepsilon
h_{n}^{-}$ with $\varepsilon\in C\ell_{N}$. By construction $T_{i}f=0$ for all
$i>2$. To fit with the formulae in Proposition \ref{TThaction} write
$\mathbf{e}_{1}T_{1}+\mathbf{e}_{2}T_{2}=\frac{1}{2}\left(  \mathbf{e}%
_{1}+\mathbf{e}_{2}\right)  \left(  T_{1}+T_{2}\right)  +\frac{1}{2}\left(
\mathbf{e}_{1}-\mathbf{e}_{2}\right)  \left(  T_{1}-T_{2}\right)  $. Even and
odd $n$ are handled separately.%
\begin{align*}
&  \left(  \mathbf{e}_{1}T_{1}+\mathbf{e}_{2}T_{2}\right)  \left(
h_{2m+1}^{+}+\varepsilon h_{2m+1}^{-}\right)  \\
&  =\left(  \mathbf{e}_{1}+\mathbf{e}_{2}\right)  \left(  N\kappa+m+1\right)
h_{2m}^{+}-\frac{1}{2}\left(  \mathbf{e}_{1}-\mathbf{e}_{2}\right)  \left(
N\kappa-\kappa+m\right)  h_{2m}^{-}\\
&  +\left(  \mathbf{e}_{1}-\mathbf{e}_{2}\right)  \varepsilon\left(
N\kappa+m+1\right)  h_{2m}^{+}+\frac{1}{2}\left(  \mathbf{e}_{1}%
+\mathbf{e}_{2}\right)  \varepsilon\left(  N\kappa-\kappa+m\right)  h_{2m}%
^{-};
\end{align*}
the coefficients of $\left(  N\kappa+m+1\right)  h_{2m}^{+}$ and $\frac{1}%
{2}\left(  N\kappa-\kappa+m\right)  h_{2m}^{-}$ are $\left(  \mathbf{e}%
_{1}+\mathbf{e}_{2}\right)  +\left(  \mathbf{e}_{1}-\mathbf{e}_{2}\right)
\varepsilon$ and $-\left(  \mathbf{e}_{1}-\mathbf{e}_{2}\right)  +\left(
\mathbf{e}_{1}+\mathbf{e}_{2}\right)  \varepsilon$ respectively. Both of these
vanish for $\varepsilon=\mathbf{e}_{1}\mathbf{e}_{2}.$Thus $\boldsymbol{D}%
\left(  h_{2m+1}^{+}+\mathbf{e}_{1}\mathbf{e}_{2}h_{2m+1}^{-}\right)  =0$.
Since $\boldsymbol{D}$ commutes with $T_{1}+T_{2}$ the polynomial $\left(
T_{1}+T_{2}\right)  \left(  h_{2m+1}^{+}+\mathbf{e}_{1}\mathbf{e}_{2}%
h_{2m+1}^{-}\right)  $ is also monogenic, and $\left(  T_{1}+T_{2}\right)
\left(  h_{2m+1}^{+}+\mathbf{e}_{1}\mathbf{e}_{2}h_{2m+1}^{-}\right)
=2\left(  N\kappa+m+1\right)  h_{2m}^{+}+\mathbf{e}_{1}\mathbf{e}_{2}\left(
N\kappa-\kappa+m\right)  h_{2m}^{-}$. This proves
\begin{align*}
\boldsymbol{D}\left(  h_{2m+1}^{+}+\mathbf{e}_{1}\mathbf{e}_{2}h_{2m+1}%
^{-}\right)   &  =0,\\
\boldsymbol{D}\left(  h_{2m}^{+}+\mathbf{e}_{1}\mathbf{e}_{2}\frac
{N\kappa-\kappa+m}{2\left(  N\kappa+m+1\right)  }h_{2m}^{-}\right)   &  =0.
\end{align*}

\section{Derivations of Various Formulae}

This section contains the derivations of some of the formulae appearing in the
paper. The formulae for $\phi_{n,j}$ and $\psi_{n,j}$ are found by means of
the Chebyshev polynomials $T_{k}$ and $U_{k}$.%
\begin{align*}
u_{1}  & =\frac{1}{2}\sum_{k=0}^{\infty}\sum_{m=0}^{\infty}t^{k+m}p_{1}%
^{k}p_{2}^{m}\left(  z^{k-m}+z^{m-k}\right)  \\
& =\frac{1}{2}\sum_{n=0}^{\infty}t^{n}\sum_{m=0}^{n}p_{1}^{n-m}p_{2}%
^{m}\left(  z^{n-2m}+z^{2m-n}\right)  \\
& =\sum_{n=0}^{\infty}t^{n}\sum_{m=0}^{n}p_{1}^{n-m}p_{2}^{m}\cos\left(
\left(  n-2m\right)  \theta\right)  \\
& =\sum_{n=0}^{\infty}t^{n}\sum_{m=0}^{n}p_{1}^{n-m}p_{2}^{m}T_{\left\vert
n-2m\right\vert }\left(  s\right)  ,
\end{align*}
where $z$ is replaced by $e^{\mathrm{i}\theta}$ and thus $s=\cos\theta$. The
last inner sum can be written as $\sum_{m=0}^{\left\lfloor n/2\right\rfloor
}\varepsilon_{n,m}\left(  p_{1}^{n-m}p_{2}^{m}+p_{1}^{m}p_{2}^{n-m}\right)
T_{n-2m}\left(  s\right)  $ where $\varepsilon_{n,m}=1$ except $\varepsilon
_{2m,m}=\frac{1}{2}$. Then use the expansion $T_{k}\left(  s\right)
=\sum\limits_{j=0}^{\left\lfloor k/2\right\rfloor }\dfrac{\left(  -k\right)
_{2j}}{j!\left(  1-k\right)  _{j}}2^{k-1-2j}s^{k-2j}$ for $k\geq1$ and extract
the coefficient of $s^{n-2j}$ to determine $\phi_{n,n-2j}$. Applying the same
technique to $u_{2}$ we obtain%
\begin{align*}
u_{2}  & =\frac{1}{z-z^{-1}}\sum_{n=0}^{\infty}t^{n}\sum_{m=0}^{n}p_{1}%
^{n-m}p_{2}^{m}\left(  z^{n-2m}-z^{2m-n}\right)  \\
& =\sum_{n=0}^{\infty}t^{n}\sum_{m=0}^{n}p_{1}^{n-m}p_{2}^{m}\frac{\sin\left(
\left(  n-2m\right)  \theta\right)  }{\sin\theta}\\
& =\sum_{n=0}^{\infty}t^{n}\sum_{m=0}^{\left\lfloor n/2\right\rfloor }\left(
p_{1}^{n-m}p_{2}^{m}-p_{1}^{m}p_{2}^{n-m}\right)  \frac{\sin\left(  \left(
n-2m\right)  \theta\right)  }{\sin\theta}\\
& =\sum_{n=0}^{\infty}t^{n}\sum_{m=0}^{\left\lfloor n/2\right\rfloor }\left(
p_{1}^{n-m}p_{2}^{m}-p_{1}^{m}p_{2}^{n-m}\right)  U_{n-1-2m}\left(  s\right)
.
\end{align*}
Then extract the coefficient of $s^{n-1-2j}$ by means of the expansion
$U_{k}\left(  s\right)  =\sum\limits_{j=0}^{\left\lfloor k/2\right\rfloor
}\dfrac{\left(  -k\right)  _{2j}}{j!\left(  -k\right)  _{j}}2^{k-2j}s^{k-2j}$
for $k>0$ to find $\psi_{n,n-1-2j}$.

\begin{proof}
(of Proposition \ref{TThaction}). The formulae for $\left(  T_{1}%
-T_{2}\right)  h_{n}^{-}$ have already been proven. For $\left(  T_{1}%
+T_{2}\right)  h_{n}^{-}$ substitute $n=2m$ and $j=1$ in (\ref{tpta}) to
obtain $\mathrm{coef}\left(  \left(  T_{1}+T_{2}\right)  h_{2m}^{-}%
,\psi_{2m-1,0}\right)  =\left(  2N\kappa+2m+2\right)  $ and thus $\left(
T_{1}+T_{2}\right)  h_{2m}^{-}=2\left(  N\kappa+m+1\right)  h_{2m-1}^{-}$,
next substitute $n=2m+1$ and $j=0,2$ in (\ref{tpta}) to show that%
\begin{align*}
& \mathrm{coef}\left(  \left(  T_{1}+T_{2}\right)  h_{2m+1}^{-},\psi
_{2m,1}\right)  \\
& =-\mathrm{coef}\left(  h_{2m+1}^{-},\psi_{2m+1,0}\right)  +\left(
2N\kappa+2m+4\right)  \mathrm{coef}\left(  h_{2m+1}^{-},\psi_{2m+1,2}\right)
\\
& =-1+\left(  N\kappa+m+2\right)  \frac{g_{1}^{o}\left(  N\kappa
-\kappa+m\right)  }{N\kappa+m+2}=N\kappa-\kappa+m.
\end{align*}
For $\left(  T_{1}+T_{2}\right)  h_{n}^{+}$ substitute $n=2m+1,j=1$ in
(\ref{tpts}) to show%
\begin{align*}
\mathrm{coef}\left(  \left(  T_{1}+T_{2}\right)  h_{2m+1}^{+},\phi
_{2m,0}\right)    & =\left(  2N\kappa+2m+2\right)  \mathrm{coef}\left(
h_{2m+1}^{+},\phi_{2m+1,1}\right)  \\
& =2\left(  N\kappa+m+1\right)  ;
\end{align*}
next substitute $n=2m,j=0,2$ in (\ref{tpts}) to show that%
\begin{align*}
& \mathrm{coef}\left(  \left(  T_{1}+T_{2}\right)  h_{2m}^{=},\phi
_{2m-1,1}\right)  \\
& =-\mathrm{coef}\left(  h_{2m}^{+},\phi_{2m,0}\right)  +\left(
2N\kappa+2m+2\right)  \mathrm{coef}\left(  h_{2m}^{+},\phi_{2m,2}\right)  \\
& =-1+\left(  N\kappa+m+1\right)  \frac{g_{1}^{o}\left(  N\kappa
-\kappa+m\right)  }{N\kappa+m+1}=N\kappa-\kappa+m.
\end{align*}
For $\left(  T_{1}-T_{2}\right)  h_{n}^{+}$ substitute $n=2m+1,j=1,3$ in
(\ref{tmts}) to show%
\begin{align*}
& \mathrm{coef}\left(  \left(  T_{1}-T_{2}\right)  h_{2m+1}^{+},\psi
_{2m,1}\right)  \\
& =-\left(  2N\kappa-2\kappa+2m+3\right)  \mathrm{coef}\left(  h_{2m+1}%
^{+},\phi_{2m+1,1}\right)  \\
& +\left(  2N\kappa+2m+4\right)  \mathrm{coef}\left(  h_{2m+1}^{+}%
,\phi_{2m+1,3}\right)  \\
& =-\left(  2N\kappa-2\kappa+2m+3\right)  +\left(  N\kappa+m+2\right)
\frac{g_{1}^{e}\left(  N\kappa-\kappa+m+1\right)  }{N\kappa+m+2}\\
& =-\left(  N\kappa-\kappa+m\right)  ;
\end{align*}
next substitute $n=2m,j=0,2$ in (\ref{tmts}) to show%
\begin{align*}
& \mathrm{coef}\left(  \left(  T_{1}-T_{2}\right)  h_{2m}^{+},\psi
_{2m-1,0}\right)  \\
& =-\left(  2N\kappa-2\kappa+2m+1\right)  \mathrm{coef}\left(  h_{2m}^{+}%
,\phi_{2m,0}\right)  +\left(  2N\kappa+2m+2\right)  \mathrm{coef}\left(
h_{2m}^{+},\phi_{2m,2}\right)  \\
& \\
& =-\left(  2N\kappa-2\kappa+2m+1\right)  +\left(  N\kappa+m+1\right)
\frac{g_{1}^{o}\left(  N\kappa-\kappa+m\right)  }{N\kappa+m+1}\\
& =-\left(  N\kappa-\kappa+m\right)  .
\end{align*}
This completes the proof of Proposition \ref{TThaction}.
\end{proof}

To prove Proposition \ref{val1pI} note that the expressions for $u_{1}$ and
$u_{2}/\left(  2\mathrm{i}t\right)  $ have the form%
\[
\left(  1-2st+2t^{2}\right)  ^{-2\kappa-\beta}\left(  1-\frac{4\left(
1-s^{2}\right)  t^{2}}{\left(  1-2st+2t^{2}\right)  ^{2}}\right)  ^{-\kappa-1}%
\]
with $\beta=1$ and $2$, respectively.

\begin{proposition}
\label{genfunS}For any $\alpha,\beta$ and $\left\vert t\right\vert <\frac
{1}{\sqrt{2}}\min\left\{  \left\vert s\pm\sqrt{s^{2}-1}\right\vert \right\}  $%
\[
\frac{\left(  1-2st+2t^{2}\right)  ^{2-\beta}}{\left(  1-4st+8s^{2}%
t^{2}-8st^{3}+4t^{4}\right)  ^{\alpha+1}}=\sum_{n=0}^{\infty}\sum
_{j=0}^{\left\lfloor n/2\right\rfloor }S\left(  n,j;\alpha,\beta\right)
t^{n}s^{n-2j}%
\]
where $S\left(  n,j;\alpha,\beta\right)  $ is given in Definition \ref{Snjab}.
\end{proposition}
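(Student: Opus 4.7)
The plan is to rewrite the quartic denominator as a difference of squares and then perform three successive binomial expansions, matching the resulting quadruple sum term-by-term against Definition \ref{Snjab}.

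First I would use the factorization
$$1-4st+8s^{2}t^{2}-8st^{3}+4t^{4}=(1-2st+2t^{2})^{2}-4(1-s^{2})t^{2},$$
which is exactly what was used above to evaluate $u_{1}(1+\mathrm{i},1-\mathrm{i},0,\ldots)$. Writing $P:=1-2st+2t^{2}$, the left-hand side of the proposition equals $P^{-2\alpha-\beta}\bigl(1-4(1-s^{2})t^{2}/P^{2}\bigr)^{-\alpha-1}$. A binomial expansion of the outer factor yields
$$\sum_{\ell\geq0}\frac{(\alpha+1)_{\ell}}{\ell!}\,4^{\ell}(1-s^{2})^{\ell}t^{2\ell}\,P^{-2\alpha-\beta-2\ell}.$$
The hypothesis $|t|<\frac{1}{\sqrt{2}}\min\{|s\pm\sqrt{s^{2}-1}|\}$ is what ensures this and the subsequent expansions converge absolutely, so that all rearrangements are legal.

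Next I would expand $(1-s^{2})^{\ell}=\sum_{b=0}^{\ell}\binom{\ell}{b}(-1)^{b}s^{2b}$ and, setting $c=2\alpha+\beta+2\ell$,
$$P^{-c}=\sum_{k\geq0}\frac{(c)_{k}}{k!}(2st-2t^{2})^{k}=\sum_{k\geq0}\sum_{a=0}^{k}\frac{(c)_{k}\,2^{k}(-1)^{a}}{(k-a)!\,a!}\,s^{k-a}\,t^{k+a}.$$
Multiplying the three series gives a quadruple sum indexed by $(\ell,b,k,a)$. Collecting the coefficient of $t^{n}s^{n-2j}$ forces the linear relations $k+a+2\ell=n$ and $k-a+2b=n-2j$, solved by $k=n-\ell-j-b$ and $a=b+j-\ell$. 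The substitution $i:=\ell-b$ then converts the denominator factorials into $i!(\ell-i)!(j-i)!(n-2\ell-2j+2i)!$, turns $(c)_{k}$ into $(2\alpha+\beta+2\ell)_{n-2\ell-j+i}$, collapses the sign to $(-1)^{\ell+j}$, and gives $2^{n-j+i}$ for the combined power of two (since $4^{\ell}\cdot 2^{k}=2^{n+\ell-j-b}=2^{n-j+i}$). This reproduces each summand of $S(n,j;\alpha,\beta)$ term for term.

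What remains is the bookkeeping check that the admissibility constraints $0\leq b\leq\ell$, $a\geq0$, and $k-a\geq0$ (the last being the tighter of $k\geq0$ and $k\geq a$) translate under $i=\ell-b$ into $0\leq\ell\leq\lfloor n/2\rfloor$ and $\max(0,\ell+j-\lfloor n/2\rfloor)\leq i\leq\min(\ell,j)$, matching the index ranges of Definition \ref{Snjab}. I expect the main obstacle to be purely organizational: four indices must be reduced to two via the $s$- and $t$-degree constraints, and the non-obvious upper bound $b\leq\lfloor(n-2j)/2\rfloor$ (rather than the weaker $b\leq n-\ell-j$) is what produces the particular form $\ell+j-\lfloor n/2\rfloor$ on the lower end. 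Once the change of variables $i=\ell-b$ is identified, the remaining verification is routine manipulation of Pochhammer symbols and factorials, with no conceptual surprises.
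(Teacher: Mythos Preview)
Your proposal is correct and follows essentially the same route as the paper: factor the quartic as $(1-2st+2t^{2})^{2}-4(1-s^{2})t^{2}$, expand in $\ell$, then expand $(1-2st+2t^{2})^{-2\alpha-\beta-2\ell}$ and $(1-s^{2})^{\ell}$, and finally reindex to reach the summand in Definition~\ref{Snjab}. The only cosmetic difference is that the paper expands $(1-2st+2t^{2})^{-\lambda}$ via $(1+2t^{2})^{-\lambda}\sum_{k}\frac{(\lambda)_{k}}{k!}(2st)^{k}(1+2t^{2})^{-k}$ and then expands $(1+2t^{2})^{-\lambda-k}$, whereas you expand $(1-(2st-2t^{2}))^{-c}$ directly and then binomially expand $(2st-2t^{2})^{k}$; both produce the identical double sum, and your index substitution $i=\ell-b$ is exactly the paper's change $m\mapsto j-i$ combined with its convention $(1-s^{2})^{\ell}=\sum_{i}\binom{\ell}{i}(-s^{2})^{\ell-i}$.
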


\begin{proof}
Denote the left hand side by $G\left(  s,t;\alpha,\beta\right)  $. The
expansion process begins with%
\begin{align*}
& \left(  1-2st+2t^{2}\right)  ^{-2\alpha-\beta}\left(  1-\frac{4\left(
1-s^{2}\right)  t^{2}}{\left(  1-2st+2t^{2}\right)  ^{2}}\right)  ^{-\alpha
-1}\\
& =\sum_{\ell=0}^{\infty}\frac{\left(  \alpha+1\right)  _{\ell}}{\ell
!}2^{2\ell}\left(  1-s^{2}\right)  ^{\ell}t^{2\ell}\left(  1-2st+2t^{2}%
\right)  ^{-2\alpha-\beta-2\ell}.
\end{align*}
By a variant of the generating function for Gegenbauer polynomials with
$\lambda>0$%
\begin{align*}
\left(  1-2st+2t^{2}\right)  ^{-\lambda} &  =\left(  1+2t^{2}\right)
^{-\lambda}\sum_{k=0}^{\infty}\frac{\left(  \lambda\right)  _{k}}{k!}\left(
2st\right)  ^{k}\left(  1+2t^{2}\right)  ^{-k}\\
&  =\sum_{k=0}^{\infty}\sum_{m=0}^{\infty}\frac{\left(  \lambda\right)  _{k}%
}{k!}\left(  2st\right)  ^{k}\frac{\left(  \lambda+k\right)  _{m}}{m!}\left(
-2t^{2}\right)  ^{m}\\
&  =\sum_{n=0}^{\infty}t^{n}\sum_{m=0}^{\left\lfloor n/2\right\rfloor }%
\frac{\left(  \lambda\right)  _{n-m}}{\left(  n-2m\right)  !m!}\left(
-1\right)  ^{m}2^{n-m}s^{n-2m},
\end{align*}
changing the summation index $k=n-2m$. Combining the expressions results in%
\begin{align*}
& G\left(  s,t;\alpha,\beta\right)  \\
& =\sum_{\ell=0}^{\infty}\frac{\left(  \alpha+1\right)  _{\ell}}{\ell!}\left(
2t\right)  ^{2\ell}\left(  1-s^{2}\right)  ^{\ell}\sum_{k=0}^{\infty}t^{k}%
\sum_{m=0}^{\left\lfloor k/2\right\rfloor }\frac{\left(  2\alpha+\beta
+2\ell\right)  _{k-m}}{\left(  k-2m\right)  !m!}\left(  -1\right)  ^{m}%
2^{k-m}s^{k-2m}\\
& =\sum_{n=0}^{\infty}t^{n}\sum_{\ell=0}^{\left\lfloor n/2\right\rfloor }%
\frac{\left(  \alpha+1\right)  _{\ell}}{\ell!}\left(  1-s^{2}\right)  ^{\ell
}\sum_{m=0}^{\left\lfloor n/2\right\rfloor -\ell}\frac{\left(  2\alpha
+\beta+2\ell\right)  _{n-2\ell-m}}{\left(  n-2\ell-2m\right)  !m!}\left(
-1\right)  ^{m}2^{n-m}s^{n-2m-2\ell},
\end{align*}
changing the summation indices to $k=n-2\ell$. Expand $\left(  1-s^{2}\right)
^{\ell}=\sum\limits_{i=0}^{\ell}\binom{\ell}{i}\left(  -s^{2}\right)
^{\ell-i}$ and change indices replacing $m$ by $j-i$. Then%
\begin{align*}
& G\left(  s,t;\alpha,\beta\right)  =\sum_{n=0}^{\infty}t^{n}\sum
_{j=0}^{\left\lfloor n/2\right\rfloor }s^{n-2j}\\
& \times\sum_{\ell=0}^{\left\lfloor n/2\right\rfloor }\sum_{i=\max\left(
0,\ell+j-\left\lfloor n/2\right\rfloor \right)  }^{\min\left(  \ell,j\right)
}\frac{\left(  \alpha+1\right)  _{\ell}\left(  2\alpha+\beta+2\ell\right)
_{n-2\ell-j+i}}{i!\left(  \ell-i\right)  !\left(  j-i\right)  !\left(
n-2\ell-2j+2i\right)  !}\left(  -1\right)  ^{\ell+j}2^{n-j+i}.
\end{align*}
The summation limits on $i$ are derived from the bounds $0\leq i\leq\ell$,
$0\leq i\leq j$, and $n-2\ell-2j+2i\geq0$. the last bound implies $i\geq
\ell+j-\frac{n}{2}$ (if $n=2m+1$ the bound is $i\geq\ell+j-m$ and
$m=\left\lfloor n/2\right\rfloor $). The bounds for $s,t$ imply that the two
factors $\left(  1-2xst+x^{2}t^{2}\right)  $ for $x=1\pm\mathrm{i}$ do not
vanish for $\left\vert \sqrt{2}t\right\vert <\min\left\vert s\pm\sqrt{s^{2}%
-1}\right\vert $ and this is sufficient for convergence of the series (if
$s=\frac{1}{2}\left(  z+z^{-1}\right)  $ then the convergence requirement is
$\left\vert \sqrt{2}t\right\vert <\min\left(  \left\vert z\right\vert
,\left\vert z\right\vert ^{-1}\right)  $). This completes the proof for the
formula for $S\left(  n,j;\alpha,\beta\right)  $.
\end{proof}

Investigating harmonic polynomials in $p_{1},p_{2},p_{3}$ which are
$\left\langle \cdot,\cdot\right\rangle _{\kappa}$-orthogonal to the planar
polynomials might be a plausible topic for further research.

\end{document}